\title[Long exact sequences coming from the snake lemma]
{A characterization of long exact sequences coming from the snake lemma}
\date{\today}
\author{Jan \v S\v tov\'\i\v cek}
\address{
Institutt for matematiske fag \\
NTNU \\
N--7491 Trondheim, Norway
}
\email{stovicek@math.ntnu.no}
\thanks{The author was supported by the Research Council of Norway through the Storforsk-project ``Homological and geometric methods in algebra''}
\subjclass[2000]{18E30 (primary), 18G10 (secondary)}
\keywords{long exact sequences, homologies, snake lemma}
\renewcommand{\iff}{if and only if }
\newcommand{\st}{such that }
\DeclareMathOperator{\Hom}{Hom}
\DeclareMathOperator{\stHom}{\underline{Hom}}
\DeclareMathOperator{\Ext}{Ext}
\DeclareMathOperator{\Ker}{Ker}
\DeclareMathOperator{\Coker}{Coker}
\newcommand{\trl}[1]{^{\le {#1}}}
\newcommand{\trr}[1]{^{\ge {#1}}}
\newcommand{\Toda}[3]{\langle {#1},{#2},{#3} \rangle}
\newcommand{\A}{\mathcal{A}}
\newcommand{\D}{\mathcal{D}}
\newcommand{\T}{\mathcal{T}}
\newcommand{\Der}[1]{\mathbf{D}^b({#1})}
\newcommand{\modR}{\hbox{\rm mod-}R}
\newcommand{\perfR}{\hbox{\rm perf-}R}
\newcommand{\stmodR}{\hbox{\rm \underline{mod}-}R}
\theoremstyle{plain}
\newtheorem{thm}{Theorem}
\newtheorem{prop}[thm]{Proposition}
\newtheorem*{question}{Question}
\theoremstyle{definition}
\newtheorem{defn}[thm]{Definition}
\theoremstyle{remark}
\newtheorem{rem}[thm]{Remark}
\newtheorem{expl}[thm]{Example}
\begin{document}
\begin{abstract}
Given an abelian category, we characterize the long exact sequences of
length six which can be obtained from the snake lemma. Equivalently,
these are the long exact sequences which arise as the homology of a
triangle in the corresponding derived bounded category.
\end{abstract}

\maketitle

\section{Introduction and preliminaries}

Let $\A$ be an abelian category. We aim to answer the following
question (see also the question asked by Deligne at the end of the first part
of~\cite{N}):
\begin{question}
Given a long exact sequence
$$
0 \to A \overset{a}\longrightarrow
B \overset{b}\longrightarrow
C \overset{c}\longrightarrow
D \overset{d}\longrightarrow
E \overset{e}\longrightarrow
F \to 0
$$
in $\A$, which conditions must it satisfy so that we can obtain it
from the snake lemma as
$$
0 \to \Ker f_1 \to  \Ker f_2 \to  \Ker f_3 \to
\Coker f_1 \to \Coker f_2 \to \Coker f_3 \to 0,
$$
where $f = (f_1,f_2,f_3)$ is a suitable homomorphism between short
exact sequences in $\A$?
\end{question}

This question is an instance of a more general problem. Let $\Der\A$
be the derived bounded category of $\A$. We ignore the possible
set-theoretic difficulties mentioned in~\cite[\S 2.2]{N2}, because in
what we intend to do with $\Der\A$ here they do not play any role.
Now, one can ask when a long exact sequence
$$
\epsilon: \quad 0 \to A_1 \longrightarrow A_2 \longrightarrow \dots \longrightarrow A_n \to 0
$$
in $\A$ arises as the homology sequence of a triangle $X \to Y \to Z \to X[1]$
in $\Der\A$.

For $n = 5$ this has been completely resolved by Neeman~\cite{N}: the long
exact sequence $\epsilon$ is the homology of a triangle
\iff the corresponding class in $\Ext^3_\A(A_5,A_1)$ vanishes. It is worth to
mention that Neeman proved more. Namely, if $\epsilon$ comes from a triangle for $n>5$,
then the corresponding class in $\Ext^3_\A$ vanishes for every bit of the form
$$
0 \to K \longrightarrow A_{i-1} \longrightarrow A_{i} \longrightarrow A_{i+1} \longrightarrow C \to 0.
$$

One may wonder what exactly happens for $n=6$. We have a necessary condition for two classes in $\Ext^3_\A$, but this is not sufficient as we illustrate in an example at the end of this note. It turns out that we must impose one more condition on a so called Toda bracket. This way, we answer Deligne's question and give a complete solution for $n = 6$. Looking back at the very first question about the snake lemma, it is straightforward to see that a long exact sequence of length six comes from the snake lemma \iff it arises as the homology of a triangle.

\smallskip

In fact, we will tackle a yet more general problem---here we follow
the formalism from~\cite{N} again. Namely, it is assumed
that $\A$ arises as the heart of a t-structure $(\D\trl0, \D\trr0)$ in
a triangulated category $\T$. In such a case we have a homological
functor $H: \T \to \A$ sending $X \in \T$ to $(X\trl0)\trr0$, and we
let $H^n(X) = H(X[n])$. We refer to~\cite[\S1.3]{BBD} for details.

It often happens that there is an exact functor $G: \Der\A \to \T$
extending the identity on $\A$. Then necessarily
$$ \Hom_{\Der\A}(X,Y[i]) \to \Hom_\T(GX,GY[i]) $$
is an isomorphism for $X,Y \in \A$ and $i = 0,1$. For $i \ge 2$,
however, the canonical morphism between the homomorphism groups is far
from being an isomorphism in general.

If there is such a functor $G$, we study the problem when a long exact
sequence in $\A$ is realized as the homology of a triangle
$X \to Y \to Z \to X[1]$ in $\T$ via the homological functor
$H: \T \to \A$. The original problem can be reconstructed by taking
$\T = \Der\A$ together with the canonical t-structure
$(\D\trl0, \D\trr0)$, and putting $G = \mathbf{1}_{\Der\A}$.

\smallskip

For formulating the result, we will need the concept of a Toda
bracket. It was first used in~\cite{T} and the definition was
extended in~\cite{C}, in both cases in a topological context. Here, we
give a purely algebraic definition:

\begin{defn} \label{defn:toda}
Let $\T$ be a triangulated category and
$$
\begin{CD}
X @>x>> Y @>y>> Z @>z>> W
\end{CD}
$$
be three composable morphisms in $\T$. Consider the following triangle
in $\T$:
$$
\begin{CD}
Z @>>> V @>>> Y[1] @>{y[1]}>> Z[1]
\end{CD}
$$
Then the \emph{Toda bracket} $\Toda zyx$ is defined as the set of all
morphisms $X[1] \to W$ which can be expressed as the composition of
some $g: X[1] \to V$ and $f: V \to W$ making the following diagram
commutative in $\T$:
$$
\xymatrix{
  && X[1] \ar[d]^{x[1]} \ar@{.>}[dl]_g \\
  Z \ar[r] \ar[d]_z & V \ar[r] \ar@{.>}[dl]^f & Y[1] \ar[r]^{y[1]} & Z[1] \\
  W
}
$$
\end{defn}

It not difficult to see from the definition that $\Toda zyx$ is either empty or a
coset of the subgroup $z\Hom(X[1],Z)+\Hom(Y[1],W)x[1]$ of $\Hom(X[1],W)$.
Moreover, $\Toda zyx \ne \emptyset$ \iff
$$
\begin{CD}
X @>x>> Y @>y>> Z @>z>> W
\end{CD}
$$
is a chain complex, that is $zy = 0 = yx$.

We will mostly be interested in whether $0 \in \Toda zyx$. This is by
far not automatically satisfied even if $\Toda zyx$ is non-empty. A
simple way of generating examples of this phenomenon is described
in~\cite[\S 3, pg. 219]{BN}; we will use this idea in the example at
the end of this note.

\section{The main result}

Our goal here is to prove the following statement:

\begin{thm} \label{thm:main}
Let $\A$ be the heart of a t-structure in a triangulated category $\T$
and assume that there is an exact functor $G: \Der\A \to \T$ extending the identity on $\A$.
Let further
$$
0 \to A \overset{a}\longrightarrow
B \overset{b}\longrightarrow
C \overset{c}\longrightarrow
D \overset{d}\longrightarrow
E \overset{e}\longrightarrow
F \to 0 \eqno{(*)}
$$
be an exact sequence in $\A$. Denote by $K, L, M$ be the images of
$B \to C$, $C \to D$ and $D \to E$, respectively, and denote as
follows the morphisms in $\T$ coming from the short exact subsequences
of $(*)$:
$$
\begin{CD}
F @>{\alpha}>> M[1] @>{\beta}>> L[2] @>{\gamma}>> K[3] @>{\delta}>> A[4].
\end{CD}
$$
Then the following are equivalent:
\begin{enumerate}
\item There exist a triangle $X \to Y \to Z \to X[1]$ in $\T$ whose
homology is isomorphic to $(*)$.
\item $0 \in \Toda \delta{\gamma\beta} \alpha$.
\end{enumerate}
\end{thm}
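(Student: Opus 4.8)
The plan is to reduce statement (2) to a concrete pair of morphisms attached to the mapping cone of $c$, and then to read the two implications as a construction and its reverse. Writing $K=\Img b$, $L=\Img c$, $M=\Img d$, the four short exact subsequences of $(*)$ yield triangles $A\to B\to K\xrightarrow{\delta_0}A[1]$, $K\to C\to L\xrightarrow{\gamma_0}K[1]$, $L\to D\to M\xrightarrow{\beta_0}L[1]$ and $M\to E\to F\xrightarrow{\alpha}M[1]$ in $\T$, so that $\delta=\delta_0[3]$, $\gamma=\gamma_0[2]$, $\beta=\beta_0[1]$. First I would observe that the two composites $(\gamma\beta)\alpha=\gamma\beta\alpha$ and $\delta(\gamma\beta)$ are exactly the classes in $\Ext^3_\A(F,K)$ and $\Ext^3_\A(M,A)$ that Neeman attaches to the two five-term subsequences $0\to K\to C\to D\to E\to F\to0$ and $0\to A\to B\to C\to D\to M\to0$ of $(*)$. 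By the remark following Definition~\ref{defn:toda}, their simultaneous vanishing is equivalent to $\Toda{\delta}{\gamma\beta}{\alpha}\neq\emptyset$, so non-emptiness of the bracket is the necessary condition already isolated in the introduction, and the whole force of (2) lies in the word $0$.

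Next I would make the homological picture explicit. After a shift, a triangle $X\to Y\to Z\to X[1]$ realises $(*)$ precisely when $H^{-1}$ and $H^0$ of $X,Y,Z$ equal $A,D$; $B,E$; $C,F$ with the prescribed maps and all other homology vanishes. The organising object is $N:=\operatorname{cone}(c)$: an octahedron on $C\twoheadrightarrow L\hookrightarrow D$ gives the truncation triangle $K[1]\xrightarrow{\iota}N\xrightarrow{q}M\xrightarrow{\gamma_0\beta_0}K[2]$, whence $H^{-1}N=K$, $H^0N=M$ and the boundary of $N$ is $\gamma_0\beta_0$. Computing the cone $V$ of Definition~\ref{defn:toda} for $y=\gamma\beta$ identifies $V\cong N[2]$; shifting the defining diagram by $[-2]$ therefore turns the datum $0\in\Toda{\delta}{\gamma\beta}{\alpha}$ into a pair of morphisms $g'\colon F[-1]\to N$ with $qg'=\alpha[-1]$ and $f'\colon N\to A[2]$ with $f'\iota=\delta_0[1]$, subject to $f'g'=0$. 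This is the form of (2) I would work with throughout.

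For (2)$\Rightarrow$(1) I would build the triangle from $g'$ and $f'$. With $j\colon D\to N$ and $\partial\colon N\to C[1]$ the structure maps of $\operatorname{cone}(c)$, the morphisms $\xi:=f'\circ j\in\Ext^2_\A(D,A)$ and $\zeta:=(\partial\circ g')[1]\in\Ext^2_\A(F,C)$ define, through their truncation triangles $A[1]\to X\to D\xrightarrow{\xi}A[2]$ and $C[1]\to Z\to F\xrightarrow{\zeta}C[2]$, objects $X$ and $Z$ with the required homology; the equalities $f'\iota=\delta_0[1]$ and $qg'=\alpha[-1]$ are what make these two-term objects induce the correct outer short exact sequences $0\to A\to B\to K\to0$ and $0\to M\to E\to F\to0$. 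I would then splice $f'$ and $g'$ into a connecting morphism $w\colon Z\to X[1]$ inducing $c$ on $H^{-1}$, set $Y:=\operatorname{cone}(w)[-1]$, and read off from the homology sequence of $X\to Y\to Z\xrightarrow{w}X[1]$ that $H^{-1}Y$ is an extension of $K$ by $A$ and $H^0Y$ an extension of $F$ by $M$. Here the hypothesis $f'g'=0$—as opposed to mere non-emptiness of the bracket—is precisely what allows $f'$ (carrying $\delta_0$) and $g'$ (carrying $\alpha$) to be assembled into one morphism $w$, forcing these extensions to be $B$ and $E$ and the full homology sequence to coincide with $(*)$.

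For (1)$\Rightarrow$(2) I would reverse this: given a triangle realising $(*)$, functoriality of the truncation $\tau_{\le-1}$ along $X\to Y\to Z\to X[1]$ yields canonical comparison morphisms relating $N=\operatorname{cone}(c)$ to $\operatorname{cone}(w)=Y[1]$, and the induced $g'\colon F[-1]\to N$ and $f'\colon N\to A[2]$ produce a representative of $\Toda{\delta}{\gamma\beta}{\alpha}$; the composite $f'g'$ then factors through a comparison map out of $Y$ whose value in the pertinent degree vanishes by exactness of the homology sequence, so the representative is $0$. The functor $G$ enters to realise $N$, $X$, $Z$ and these comparison maps as images of honest two-term complexes in $\Der\A$, the isomorphism $\Hom_{\Der\A}(-,-[i])\xrightarrow{\ \sim\ }\Hom_\T(-,-[i])$ for $i=0,1$ guaranteeing that the four $\Ext^1$-connecting maps transport without loss, so that the only genuinely $\T$-dependent obstruction is the bracket itself. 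I expect the real obstacle to be the gluing in (2)$\Rightarrow$(1): producing an actual triangle, and not merely three objects with the correct homology, forces a careful handling of octahedra and signs, and it is exactly at the verification that $H^0Y=E$ that the equation $f'g'=0$ is indispensable.
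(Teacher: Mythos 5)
Your reformulation of condition (2) is exactly right and matches the paper's Remark~\ref{rem:interpret}: with $N=\operatorname{cone}(c)$ and the truncation triangle $K[1]\xrightarrow{\iota}N\xrightarrow{q}M\xrightarrow{}K[2]$, the condition becomes the existence of $g'\colon F[-1]\to N$ with $qg'=\alpha[-1]$ and $f'\colon N\to A[2]$ with $f'\iota=\delta_0[1]$ such that $f'g'=0$. But in $(2)\Rightarrow(1)$ your central step --- splicing $f'$ and $g'$ into $w\colon Z\to X[1]$ --- is precisely the part you never construct, and it does not follow by soft arguments. A lift $\tilde c\colon C[1]\to X[1]$ of $c[1]$ through $X[1]\to D[1]$ exists since $\xi c=f'jc=0$; but to extend $\tilde c$ over $C[1]\to Z$ you need $\tilde c\,\partial\,g'=0$, and for that you must choose $\tilde c$ with $\tilde c\,\partial=\iota_X f'$ (where $\iota_X\colon A[2]\to X[1]$), so that $f'g'=0$ can be invoked. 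A priori $\tilde c\,\partial=\iota_X\theta$ for some $\theta\colon N\to A[2]$ agreeing with $f'$ only after composition with $j$ and with $\iota_X$; the ambiguity in the two lifts gives you $\xi\Hom(N,D)+\Hom(C[1],A[2])\partial$ to play with, and the difference $\theta-f'$ does not visibly lie in that subgroup (chasing it along $j$ leaves a term of the form $\xi(\rho-\mathbf{1})$ with $\rho\in\End(D)$ that does not obviously die). Moreover, even granted some $w$ inducing $c$, you must verify that the extensions $H^{-1}Y$ and $H^{0}Y$ carry the specific classes $\delta_0\in\Ext^1_\A(K,A)$ and $\alpha\in\Ext^1_\A(F,M)$ --- not merely that they are extensions of $K$ by $A$ and of $F$ by $M$ --- and this depends on the particular $w$; you assert it is ``forced'' but give no argument. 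The paper avoids ever constructing the connecting map by hand: it uses $f'g'=0$ exactly once, to factor $g'[-2]$ through the cone $V$ of $F[-2]\to W$ (with $W=N[-1]$), obtaining $h\colon V\to A[1]$; then $Y$ is the cocone of $h$, $X$ a homotopy pullback, and the triangle together with the correct extension classes drops out of the $3\times3$ diagrams $(\dag)$ and $(\ddag)$, the defining properties of $g'$ and $f'$ feeding in automatically. Your assembly order (build $X$ and $Z$ from the two degree-two classes first, then connect) is where the real obstruction hides, as you yourself suspect at the end.

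The $(1)\Rightarrow(2)$ sketch has two further gaps. Your criterion ``a triangle realises $(*)$ precisely when the homologies of $X,Y,Z$ are concentrated as stated'' is false as an equivalence: $H$ may kill nonzero objects of $\T$, so a realising triangle need not be cohomologically concentrated, and the paper must first replace it by the truncated triangle $(X\trl1)\trr0\to(Y\trl1)\trr0\to Z'$, invoking \cite[Proposition 1.3.5]{BBD}. Next, ``functoriality of $\tau_{\le-1}$'' is not enough: the truncation is a functor on objects and morphisms but does not take triangles to triangles, so the comparison data must be produced via homotopy pushouts/pullbacks and completed to octahedra (the paper's diagrams $(\Delta)$ and $(\Xi)$, using \cite[Lemma 1.4.3]{N2}); and one then only obtains $qg'=\alpha[-1]$ and $f'\iota=\delta_0[1]$ up to automorphisms $\phi',\psi'$ of $F$ and $A$, which happens to still place $0$ in $\Toda{\delta}{\gamma\beta}{\alpha}$ but must be said. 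Finally, a small but relevant slip: your $\xi=f'j$ and $\zeta=(\partial g')[1]$ live in $\Hom_\T(D,A[2])$ and $\Hom_\T(F,C[2])$, not in $\Ext^2_\A$; this is harmless for forming cones in $\T$, but it undercuts your closing claim that $G$ transports all the data from $\Der\A$, since $G$ is only guaranteed to be an isomorphism on $\Hom$-groups in degrees $0$ and $1$.
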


\begin{rem} \label{rem:interpret}
Note that $0 \in \Toda \delta{\gamma\beta}\alpha$ automatically
implies that $\delta\gamma\beta = 0 = \gamma\beta\alpha$. That is, the
image under $G: \Der\A \to \T$ of the corresponding classes in
$\Ext^3_\A(M,A)$ and $\Ext^3(F,K)$ coming from $(*)$ vanishes.

If we already know that $\delta\gamma\beta = 0 = \gamma\beta\alpha$,
then the condition $0 \in \Toda \delta{\gamma\beta}\alpha$ can be
restated as follows. Let $W$ fit into the triangle
$D[-1] \to W \to C \overset{c}\to D$. Then there is a natural triangle
$$
\begin{CD}
K[2] @>>> W[2] @>>> M[1] @>{\gamma\beta}>> K[3]
\end{CD}
$$
coming from the exact sequence $0 \to K \longrightarrow C \overset{c}\longrightarrow D \longrightarrow M \to 0$.
Moreover, we know that the morphisms $F \to M[1]$ and
$K[2] \to A[3]$ coming from $(*)$ factorize (non-uniquely) through
$W[2] \to M[1]$ and $K[2] \to W[2]$, respectively:
$$
\xymatrix{
  && F \ar[d]^\alpha \ar@{.>}[dl] \\
  K[2] \ar[r] \ar[d]_{\delta[-1]} & W[2] \ar[r] \ar@{.>}[dl] & M[1] \ar[r]^{\gamma\beta} & K[3] \\
  A[3]
}
$$
The assertion in Theorem~\ref{thm:main}(2) is then equivalent to being able to choose
the factorization so that the composition $F \to W[2] \to A[3]$ vanishes.
\end{rem}

\begin{proof}
$(1) \implies (2)$. Assume there is a triangle
$$
\begin{CD}
X @>x>> Y @>y>> Z @>z>> X[1]
\end{CD}
$$
in $\T$ whose homology is $(*)$; say $H^0(X) = A$. Then the main
result of~\cite{N} says that
$\delta\gamma\beta = 0 = \gamma\beta\alpha$ and hence
$\Toda zyx \ne \emptyset$, but this will also follow from our argument
directly.

We need to prove more, namely that $0 \in \Toda zyx$. To do this, note first that we can without loss of generality assume that $X,Y,Z \in \D\trr{-1} \cap \D\trl1$. If not we can replace the original triangle $X \to Y \to Z \to X[1]$ by:
$$
\begin{CD}
(X\trl1)\trr0 @>{(x\trl1)\trr0}>> (Y\trl1)\trr0 @>{y'}>> Z' @>{z'}>> (X\trl1)\trr0[1].
\end{CD}
$$
The long exact sequence of homologies remains unchanged by~\cite[Proposition 1.3.5]{BBD}, and we have $(X\trl1)\trr0, (Y\trl1)\trr0, Z' \in \D\trr{-1} \cap \D\trl1$.

Let us next consider the truncations of $X$ with respect to
$(\D\trl0, \D\trr0)$:
$$
\begin{CD}
X\trl0 @>>> X @>>> X\trr1 @>>> (X\trl0)[1]
\end{CD}
$$
An easy computation of homologies shows that $X\trl0 \cong A$ and
$X \trr1 \cong D[-1]$. Note that here we need the fact that $X \in \D\trr{-1} \cap \D\trl1$, since in general it is perfectly possible that there are non-zero objects in $\T$ whose homologies with respect to $(\D\trl0,\D\trr0)$ all vanish. By forming a homotopy push-out of
$$
\begin{CD}
X @>x>> Y \\
@VVV      \\
X\trr1
\end{CD}
$$
we subsequently obtain the following diagram with triangles in rows and columns:
$$
\begin{CD}
A     @=     A                          \\
@VVV         @VVV                       \\
X     @>x>>  Y    @>y>>  Z  @>z>> X[1]  \\
@VVV         @VtVV       @|       @VVV  \\
D[-1] @>r>>  V    @>v>>  Z  @>>>  D     \\
@VVV         @VhVV                      \\
A[1]  @=     A[1]
\end{CD}
\eqno{(\dag)}
$$
Similarly, one easily computes that $Z\trl0 \cong C$ and
$Z\trr1 \cong F[-1]$. The truncation with respect to the t-structure
gives a triangle
$$
\begin{CD}
F[-2] @>>> C @>>> Z @>>> F[-1],
\end{CD}
$$
and by constructing a homotopy pull-back of
$$
\begin{CD}
  @.    C  \\
@.   @VVV  \\
V @>v>> Z,
\end{CD}
$$
one obtains a diagram:
$$
\begin{CD}
      @.    F[-2] @=     F[-2]         \\
@.          @VfVV        @VVV          \\
D[-1] @>>>  W     @>s>>  C    @>c>> D  \\
@|          @VgVV        @VVV       @| \\
D[-1] @>r>> V     @>v>>  Z    @>>>  D  \\
@.          @VVV         @VVV          \\
      @.    F[-1] @=     F[-1].
\end{CD}
\eqno{(\ddag)}
$$
Clearly, the composition
$$
\begin{CD}
F @>{f[2]}>> W[2] @>{(hg)[2]}>> A[3],
\end{CD}
$$
is zero, where the morphisms $f$, $g$ and $h$ come from diagrams
$(\dag)$ and $(\ddag)$.

All we have to show now is that the morphisms
$f[2]: F \to W[2]$ and $(hg)[2]: W[2] \to A[3]$ fit up to isomorphism the definition of
the Toda bracket. More precisely, consider
the triangle
$$
\begin{CD}
  K[2] @>{\epsilon}>> W[2] @>{\zeta}>> M[1] @>{\gamma\beta}>> K[3].
\end{CD}
$$
We will prove that there are automorphisms $\phi$ and $\psi$ of $F$ and $A[3]$, respectively, \st the following equalities hold:
\begin{align*}
\alpha &= \zeta \circ f[2] \circ \phi, \\
\delta[-1] &= \psi \circ (hg)[2] \circ \epsilon.
\end{align*}

For the first equality, note that $W\trl0 \cong K \cong V\trl0$ and
$g\trl0: W\trl0 \to V\trl0$ is an isomorphism. Taking into account
that $W\trr1 \cong M[-1]$ and $V\trr1 \cong E[-1]$, we get the
following commutative diagram with triangles in rows:
$$
\begin{CD}
K  @>{\epsilon[-2]}>> W     @>{\zeta[-2]}>> M[-1] @>>> K[1]  \\
@|                  @VgVV         @.                   @|    \\
K       @>>>          V          @>>>       E[-1] @>>> K[1].
\end{CD}
$$

We can choose the morphisms in the upper row as stated since the triangle there is a truncation triangle of $W$ with respect to the t-structure and as such it is unique up to a unique isomorphism. By~\cite[Lemma 1.4.3]{N2}, we can complete this diagram with a morphism $M[-1] \to E[-1]$ \st the resulting morphism of triangles can be completed to an octahedron. Note that there is in fact only one morphism $M[-1] \to E[-1]$ which completes the diagram, namely $g\trr1$, and it is up to isomorphism just the shift of the monomorphism $M \to E$ in $\A$ coming from $(*)$ in the statement of Theorem~\ref{thm:main}.
We can summarize our findings in the following diagram with triangles in rows and columns:
$$
\begin{CD}
   @.               F[-2]   @>{\phi'}>>     F[-2]            \\
@.                  @VfVV              @V{\alpha[-2]}VV      \\
K  @>{\epsilon[-2]}>> W     @>{\zeta[-2]}>> M[-1] @>>> K[1]  \\
@|                  @VgVV                   @VVV       @|    \\
K  @>k>>              V     @>>>            E[-1] @>>> K[1]  \\
@.                  @VVV                 @V{e[-1]}VV         \\
   @.               F[-1]   @>{\phi'[1]}>>  F[-1]            \\
\end{CD}
\eqno{(\Delta)}
$$
%
%
We do not know whether $\phi'$ is the identity in general, but it certainly is an isomorphism since $(\Delta)$ is an instance of the octahedral axiom. The desired equality appears in the top square in the diagram when taking $\phi = (\phi'[2])^{-1}$.
 
Let us finally prove that $\delta[-1] = \psi \circ (hg)[2] \circ \epsilon$ for some automorphism $\psi$. As before, one readily checks that
$Y \trr1 \cong E[-1] \cong V\trr1$ and
$t\trr1: Y \trr1 \to V\trr1$ is an isomorphism (see diagram $(\dag)$
for the morphism $t: Y \to V$), and that one can obtain a diagram
$$
\begin{CD}
  E[-2] @>>> B    @>>>  Y     @>>> E[-1]  \\
  @|         @VVV      @VtVV      @|      \\
  E[-2] @>>> K    @>k>> V     @>>> E[-1]
\end{CD}
$$
which admits a completion to an octahedron. Again, the morphism $B \to K$ in the diagram is up to isomorphism nothing else than the
epimorphism in $\A$ coming from $(*)$ in the statement of
Theorem~\ref{thm:main}. Therefore, we obtain the following commutative
diagram with triangles in rows and columns:
$$
\begin{CD}
      @.     A    @>{\psi'[-1]}>> A                 \\
@.         @VaVV                @VVV                \\
E[-2] @>>>   B    @>>>            Y     @>>> E[-1]  \\
@|         @VVV                 @VtVV      @|       \\
E[-2] @>>>   K    @>k>>           V     @>>> E[-1]  \\
@.     @V{\delta[-3]}VV         @VhVV               \\
      @.   A[1]   @>{\psi'}>>    A[1]               \\
\end{CD}
\eqno{(\Xi)}
$$
%

Since $(\Xi)$ is an instance of the octahedral axiom, $\psi'$ is an isomorphism.
From the bottom square we see that $\delta[-3] = (\psi')^{-1} hk$, and by combining
this with the equality coming from the leftmost square of diagram
$(\Delta)$, we get $\delta[-3] = (\psi')^{-1} hg \circ \epsilon[-2]$. The equality we
wanted to prove is obtained by shifting this by two and putting $\psi = (\psi'[2])^{-1}$. This concludes
the proof of $(1) \implies (2)$.

\smallskip

$(2) \implies (1)$. This follows basically by retracing the
steps. Assume we have an exact sequence as in $(*)$ satisfying
$(2)$. Let us denote the morphisms constructed in
Remark~\ref{rem:interpret} by $f': F \to W[2]$ and
$g': W[2] \to A[3]$. First we put $f = f'[-2]$ and form the triangles
$$
\begin{CD}
  D[-1] @>>> W @>s>> C @>c>> D
\end{CD}
\eqno{(\sharp)}
$$
and
$$
\begin{CD}
  F[-2] @>f>> W @>g>> V @>>> F[-1].
\end{CD}
$$
Next we form diagram $(\ddag)$ as a homotopy pushout of
$$
\begin{CD}
W      @>s>>   C   \\
@VgVV              \\
V
\end{CD}
$$

Because $g'[-2] \circ f = 0$ by assumption, there is a morphism
$h: V \to A[1]$ \st $hg = g'[-2]$. Considering the triangle
$$
\begin{CD}
A @>>> Y @>t>> V @>h>> A[1],
\end{CD}
$$
this allows us to form diagram $(\dag)$ as a homotopy pull-back of
$$
\begin{CD}
      @.    Y   \\
@.        @VtVV \\
D[-1] @>r>> V.
\end{CD}
$$
Now we can define the triangle
$$
\begin{CD}
X @>x>> Y @>y>> Z @>z>> X[1]
\end{CD}
\eqno{(\diamondsuit)}
$$
for condition $(1)$ as the second row of $(\dag)$ and we must prove
that its homology is indeed isomorphic to $(*)$.

Also here we only reverse the arguments. We know that the homology of
triangle $(\sharp)$ is isomorphic to
$$
0 \to K \longrightarrow C \overset{c}\longrightarrow D \longrightarrow M \to 0.
$$
By examining columns in diagram $(\Delta)$, which in this case can be constructed with $\phi' = \mathbf{1}_{F[-2]}$, starting with the commutative square
$$
\begin{CD}
F[-2]  @=                F[-2]\,           \\
@VfVV                  @V{\alpha[-2]}VV    \\
W      @>{\zeta[-2]}>>   M[-1],
\end{CD}
$$

one immediately sees that
$$
0 \to H^1(W) \overset{H^1(g)}\longrightarrow H^1(V) \longrightarrow H^1(F[-1]) \to 0
$$
is a short exact sequence corresponding to $\alpha: F \to M[1]$. Then
it easily follows from diagram $(\ddag)$ that the homology of the
triangle $D[-1] \overset{r}\to V \overset{v}\to Z \to D$ is precisely
$$
0 \to K \longrightarrow C \overset{c}\longrightarrow D \overset{d}\longrightarrow E \overset{e}\longrightarrow F \to 0.
$$
A similar argument shows that the homology of triangle $(\diamondsuit)$
is the long exact sequence $(*)$ from the statement of the theorem,
which concludes the proof.
\end{proof}

\section{An example}

It is relatively easy to construct an exact sequence of length six
which does not come from the snake lemma. Namely, let $R$ be any ring
admitting a module $M$ of projective dimension at least $4$ and
consider the beginning of the projective resolution
$$
0 \to \Omega^4 M \longrightarrow P_3 \longrightarrow P_2 \longrightarrow P_1 \longrightarrow P_0 \longrightarrow M \to 0.
$$
Such a sequence certainly cannot come from the snake lemma because it
has a non-zero class in $\Ext_R^4(M,\Omega^4 M)$.

We are going to construct an example with finer properties. Namely, if
$k$ is a field and $R = k[x]/(x^3)$, we will construct an exact sequence
$$
0 \to A \overset{a}\longrightarrow
B \overset{b}\longrightarrow
C \overset{c}\longrightarrow
D \overset{d}\longrightarrow
E \overset{e}\longrightarrow
F \to 0
$$
\st in the notation of Theorem~\ref{thm:main}:
\begin{enumerate}
\item $\delta\gamma\beta = 0 = \gamma\beta\alpha$,
\item $0 \not\in \Toda \delta{\gamma\beta} \alpha$.
\end{enumerate}
This will also show that the necessary condition given by
Neeman~\cite{N} is not sufficient.

In order to verify the properties of our forthcoming example, we need a little more
theory. If $R$ is a finite dimensional self-injective algebra over a
field $k$, we denote by $\modR$ the category of all finite dimensional
modules and by $\stmodR$ the corresponding stable category modulo
projectives-injectives. It is well known that $\stmodR$ has a natural
triangulated structure with the shift functor being the cosyzygy
functor $\Omega^-$ (see for example~\cite[\S 1]{H}). Moreover, there
is a close link between $\Der\modR$ and $\stmodR$:

\begin{prop} \label{prop:stmod}
Let $R$ be a finite dimensional self-injective algebra over a
field. Then there is an exact functor $Q: \Der\modR \to \stmodR$ \st
\begin{enumerate}
\item The following diagram with the obvious functors is commutative:
$$
\xymatrix{
\Der\modR \ar[r]^(.57)Q & \stmodR  \\
\modR \ar[u] \ar[ur]
}
$$
\item The induced homomorphisms
$$ \Hom_{\Der\modR}(X,Y[i]) \longrightarrow \stHom(QX,\Omega^{-i} QY) $$
are isomorphisms for each $X,Y \in \modR$ and $i \ge 1$.
\end{enumerate}
\end{prop}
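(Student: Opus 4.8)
The plan is to realize $Q$ as the composite of the Verdier localization $\Der\modR \to \Der\modR/\perfR$ onto the singularity category with a triangle equivalence $\Der\modR/\perfR \simeq \stmodR$. Since $R$ is self-injective, $\modR$ is a Frobenius category, the thick subcategory $\perfR$ of perfect complexes is generated by $R$ inside $\Der\modR$, and the required equivalence is a theorem of Rickard and Buchweitz (see also~\cite{H}), the equivalence sending a module, regarded as a stalk complex, to itself. First I would record two formal consequences: under this equivalence the suspension $[1]$ corresponds to the cosyzygy $\Omega^{-1}$, and the composite $\modR \to \Der\modR \to \stmodR$ agrees with the canonical projection. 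This simultaneously fixes $Q$ as the identity on $\modR$ and gives assertion~(1) with no further work.

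For assertion~(2) I would translate the statement across this equivalence. For modules $X,Y$ one has $\Hom_{\Der\modR}(X,Y[i]) = \Ext^i_R(X,Y)$, while $Q(Y[i]) \cong \Omega^{-i}QY = \Omega^{-i}Y$ because $Q$ is exact and restricts to the identity on $\modR$. Thus the map in question becomes the localization map $\Ext^i_R(X,Y) \to \stHom(X,\Omega^{-i}Y)$, and the claim is exactly that it is an isomorphism for $i \ge 1$. (For $i=0$ it is only surjective, since $\stHom$ annihilates morphisms factoring through projectives; this is precisely why the range in the statement begins at $i=1$.)

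The heart of the matter is then the purely module-theoretic isomorphism $\Ext^i_R(X,Y) \cong \stHom(X,\Omega^{-i}Y)$ for $i \ge 1$. Because $R$ is self-injective, $\Omega$ is an autoequivalence of $\stmodR$, so it suffices to produce $\Ext^i_R(X,Y) \cong \stHom(\Omega^i X, Y)$. For $i=1$ I would apply $\Hom_R(-,Y)$ to a syzygy sequence $0 \to \Omega X \to P \to X \to 0$; the cokernel of $\Hom_R(P,Y) \to \Hom_R(\Omega X,Y)$ is $\Ext^1_R(X,Y)$. The place where self-injectivity enters is the identification of this image: a morphism $\Omega X \to Y$ factors through \emph{some} projective exactly when it factors through $P$, because $P$ is injective and hence any $\Omega X \to Q$ extends along the inclusion $\Omega X \hookrightarrow P$. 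Consequently that image is precisely the subgroup of maps vanishing in $\stmodR$, and the standard dimension shift $\Ext^i_R(X,Y) \cong \Ext^1_R(\Omega^{i-1}X,Y)$ then settles $i \ge 2$.

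Finally I would verify that these explicit isomorphisms are compatible with $Q$, i.e.\ that the syzygy isomorphism is exactly the localization map transported across $\Der\modR/\perfR \simeq \stmodR$. Matching the connecting morphisms of the syzygy triangles with $Q$ applied to $Y[i]$ is the bookkeeping I expect to be the most delicate step, even though no individual computation is hard. The essential input throughout, and the one that would fail for a general algebra, is self-injectivity: it is what makes $\modR$ Frobenius, so that $\stmodR$ is triangulated with $\Omega$ invertible, and it is also what identifies the maps through projectives in the syzygy argument.
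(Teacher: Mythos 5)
Your proposal is correct and follows essentially the same route as the paper: $Q$ is constructed as the Verdier localization $\Der\modR \to \Der\modR/\perfR$ composed with a quasi-inverse of Rickard's equivalence $\stmodR \simeq \Der\modR/\perfR$, and part~(2) reduces to the chain $\Hom_{\Der\modR}(X,Y[i]) \cong \Ext^i_R(X,Y) \cong \stHom(X,\Omega^{-i}Y)$. The only difference is one of detail: the paper simply cites the second isomorphism as standard, whereas you prove it via the syzygy sequence and dimension shifting, correctly isolating self-injectivity as the point where maps through projectives are identified.
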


\begin{proof}
$(1)$. This follows from (the proof of) \cite[Theorem 2.1]{R}. Namely, if
$\perfR$ stands for the full triangulated subcategory of $\Der\modR$
formed by the perfect complexes, then $Q$ is obtained as a composition
of the localization functor
$$ Q': \Der\modR \longrightarrow \Der\modR / \perfR $$
with a quasi-inverse to the natural functor (which is a triangle
equivalence by~\cite{R}):
$$ \stmodR \longrightarrow \Der\modR / \perfR. $$

$(2)$. This follows by considering the isomorphisms
$$
\Hom_{\Der\modR}(X,Y[i]) \cong
\Ext^i_R(X,Y) \cong
\stHom(X,\Omega^{-i} Y)
$$
and taking into account the construction of $Q$.
\end{proof}

Now we can construct the example.

\begin{expl}
Let $k$ be a field and $R = k[x]/(x^3)$. Let
$Q: \Der\modR \to \stmodR$ be a functor as in
Proposition~\ref{prop:stmod}. Let us further consider the unique composition series
$$
0 \subseteq S \subseteq N \subseteq R
$$
of $R$. It is well known that $S$, $N$ and $R$ are up to isomorphism
the only indecomposables in $\modR$. The non-split exact sequence
$0 \to S \longrightarrow N \longrightarrow S \to 0$ in $\modR$ yields the following triangle
in $\stmodR$:
$$
\begin{CD}
S @>>> N @>>> S @>>> N (= \Omega^- S).
\end{CD}
$$
This can be rewritten as:
$$
\begin{CD}
  S @>{\alpha'}>>
  \Omega^- S @>{\eta'}>>
  \Omega^{-3} N @>{\delta'}>>
  \Omega^{-4} N
\end{CD}
$$
Obviously $\delta'\eta' = 0 = \eta'\alpha'$ and one easily checks that $0 \not\in \Toda {\delta'}{\eta'}{\alpha'}$, for example by using the comment on contractible triangles from~\cite[\S 3, pg. 219]{BN}. By Proposition~\ref{prop:stmod}, we can find morphisms
$$
\begin{CD}
  S @>{\alpha}>>
  S[1] @>{\eta}>>
  N[3] @>{\delta}>>
  N[4]
\end{CD}
$$
in $\Der\modR$ \st the images of $\alpha,\eta,\delta$ under $Q$ are
$\alpha',\eta',\delta'$, respectively. It follows immediately that
$\delta\eta = 0 = \eta\alpha$ and $0 \not\in \Toda \delta\eta\alpha$.

One can construct exact sequences in $\modR$ corresponding to
$\alpha$, $\eta$ and $\delta$ in a standard way by using injective
coresolutions and pull-backs:
$$
\begin{array}{rcl}
\alpha: &&
0 \to S \longrightarrow N \longrightarrow S \to 0,
\\
\eta: &&
0 \to N \longrightarrow R \longrightarrow N \longrightarrow
S \to 0,
\\
\delta: &&
0 \to N \longrightarrow S \oplus R \longrightarrow N \to 0.
\end{array}
$$

We leave the details for the reader. If we now splice these three
sequences, we get an exact sequence of legth six with the
required properties:
$$ 0 \to N \longrightarrow S \oplus R \longrightarrow R \longrightarrow N \longrightarrow N \longrightarrow S \to 0. $$

That is, this sequence satisfies the necessary condition
from~\cite{N} (the non-empty Toda bracket), but it still cannot be
obtained from the snake lemma because the Toda bracket does not
contain the zero morphism.
\end{expl}

\bibliographystyle{plain}
\bibliography{bibliography}{}

\end{document}